\documentclass{amsart} 
\usepackage{graphics,amsmath,amsthm,amssymb} 
\usepackage{color} 
\newtheorem{theorem}{Theorem}[section]

\newtheorem{remark}[theorem]{Remark} 
 
\newtheorem{example}[theorem]{Example}

\begin{document} 
\title{Partition theorems and the Chinese remainder theorem} 
\author[S.-C. Chen]{Shi-Chao Chen} 
\address{Institute of Contemporary Mathematics, Department of Mathematics 
and Statistics Sciences, Henan University, Kaifeng, 475001, 
China, schen@henu.edu.cn} 
\begin{abstract}The famous partition theorem of Euler states that partitions of $n$ into distinct parts are equinumerous with partitions of $n$ into odd parts. Another famous partition theorem due to MacMahon states that the number of partitions of $n$ with all parts repeated at least once equals the number of partitions of $n$ where all parts must be even or congruent to $3 \pmod 6$. These partition theorems were further extended by Glaisher, Andrews, Subbarao, Nyirenda and Mugwangwavari. In this paper, we utilize the Chinese remainder theorem to prove a comprehensive partition theorem that encompasses all existing partition theorems. We also give a natural generalization of Euler's theorem  based on a special complete residue system. Furthermore, we establish interesting congruence connections between the partition function $p(n)$ and related partition functions. 
\end{abstract}

\subjclass[2020]{ 11P83}
\keywords{ Partitions; congruences; the Chinese remainder theorem }
\maketitle 
\section{Introduction} 
Let $\mathbb{N}=\{1, 2, \cdots\}$ be the set of natural numbers. A partition of $n\in\mathbb{N}$ is a finite non-increasing sequence of positive integers $\lambda=(\lambda_1,\lambda_2,\cdots,\lambda_s)$ such that 
\[n=\lambda_1+\lambda_2+\cdots+\lambda_s\] 
with $\lambda_1\ge\lambda_2\ge\cdots\ge\lambda_s\ge1$. 
We refer to the numbers $\lambda_i$ as the \emph{parts} of $\lambda$. If a part $\lambda_i$ appears $d$ times in $\lambda$, then we say the \emph{multiplicity} of $\lambda_i$ is $d$. For instance, a partition of $8$ is $\lambda=(2, 2, 2, 1, 1)$, the multiplicity of the part $2$ is 3 and the multiplicity of $1$ is 2. For more details about partitions, we refer the reader to \cite{an}. 

Let $\mathcal{A}, \mathcal{B}$ be two subsets of $\mathbb{N}$ and $n$ be a positive integer. 
We denote by 
$P(\mathcal{A}; n)$ the number of partitions of $n$ in which the multiplicities of the parts belong to $\mathcal{A}$ and by $Q(\mathcal{B}; n)$ the number of partitions of $n$ whose parts belong to $\mathcal{B}$. For convenience, we define $P(\mathcal{A}; 0)=Q(\mathcal{B}; 0)=1$. 
Many classical partition theorems in the theory of partitions can be expressed as identities of the form 
\begin{equation}\label{identity} 
P(\mathcal{A}; n) = Q(\mathcal{B}; n). 
\end{equation} 

One of the most famous is Euler's partition theorem, which states that the number of partitions of $n$ into distinct parts are equinumerous with the number of partitions of $n$ into odd parts \cite[Corollary 1.2]{an}. Since Euler, many different and unexpected partition identities were found \cite{ald, an}. 

In this paper we are concerned with different extension of Euler's theorem. To give more explicit statements of such results, we need some notation. 
For $a\in \mathbb{N}$ and $b\in\mathbb{N}\cup\{0\}$, we let $a\mathbb{N}+b$ denote the residue class $b\pmod a$, i.e. 
\[a\mathbb{N}+b=\{an+b: n\in\mathbb{N}\},\] 
and define $a\mathbb{N}+b:=\emptyset$ if $a$ tends to infinity. 
Let $a\mathbb{N}\setminus b\mathbb{N}$ denote the substraction of $b\mathbb{N}$ from $a\mathbb{N}$, i.e. $a\mathbb{N}\setminus b\mathbb{N}=\{x: x\in a\mathbb{N}\text{ and } x\not\in b\mathbb{N}\}$. For instance, 
$2\mathbb{N}\setminus 3\mathbb{N}=(6\mathbb{N}+2)\cup(6\mathbb{N}+4)$. 
With our notation, Euler's theorem can be restated as follows: 
\begin{theorem}[Euler]\label{Euler} For all $n\in\mathbb{N}$, we have 
\[P(\{1\}; n) = Q(\mathbb{N}\setminus 2\mathbb{N}; n).\] 
\end{theorem}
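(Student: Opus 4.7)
The plan is to prove Theorem \ref{Euler} by the generating function method, which is both classical and well-suited to the Chinese remainder theorem framework that the rest of the paper will build. First I would write down the ordinary generating functions for the two partition counts. Partitions of $n$ in which every part has multiplicity in $\{1\}$ (that is, all parts distinct) are enumerated by
\[
\sum_{n\ge 0}P(\{1\};n)q^n = \prod_{k\ge 1}(1+q^k),
\]
since each positive integer $k$ is either absent or used exactly once. Partitions whose parts lie in $\mathbb{N}\setminus 2\mathbb{N}$ (odd parts) are enumerated by
\[
\sum_{n\ge 0}Q(\mathbb{N}\setminus 2\mathbb{N};n)q^n = \prod_{k\ge 1}\frac{1}{1-q^{2k-1}},
\]
since each odd part $2k-1$ may be used any nonnegative number of times.

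The core step is the termwise identity $1+q^k=(1-q^{2k})/(1-q^k)$, which gives
\[
\prod_{k\ge 1}(1+q^k) \;=\; \prod_{k\ge 1}\frac{1-q^{2k}}{1-q^k}.
\]
The numerator is $\prod_{k\ge 1}(1-q^{2k})$, and this cancels the even-index factors of the denominator $\prod_{k\ge 1}(1-q^k)=\prod_{k\ge 1}(1-q^{2k-1})\prod_{k\ge 1}(1-q^{2k})$, leaving exactly $\prod_{k\ge 1}1/(1-q^{2k-1})$. Comparing coefficients of $q^n$ on the two ends of the resulting identity yields $P(\{1\};n)=Q(\mathbb{N}\setminus 2\mathbb{N};n)$.

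As a sanity check and as an alternative route I would also note Glaisher's bijection: send a partition of $n$ into distinct parts to the partition into odd parts obtained by writing each part as $\lambda_i=2^{a_i}m_i$ with $m_i$ odd and recording $m_i$ with multiplicity $2^{a_i}$, collecting repeats. The inverse reads the multiplicity of each odd part in binary and splits accordingly; the distinctness of the preimage parts is forced by the uniqueness of binary expansions.

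I do not anticipate a genuine obstacle here: the algebraic identity is one line and the bijection is elementary. The only formal point is that the infinite products are to be interpreted in $\mathbb{Z}[[q]]$, where coefficient extraction is well defined because only finitely many factors contribute to any given power of $q$. The real interest of this warm-up is that the identity $1+q^k=(1-q^{2k})/(1-q^k)$ is the simplest instance of a telescoping over a complete residue system modulo $2$; this is precisely the feature that the paper will generalize via the Chinese remainder theorem to obtain the comprehensive partition theorem promised in the abstract.
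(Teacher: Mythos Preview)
Your proof is correct and is essentially the paper's own argument: the paper's Theorem~\ref{lemma} specializes, for Euler's theorem, to the identity $1+x=(1-x^2)/(1-x)$, and its proof then substitutes $x=q^n$ and takes the product over $n\ge 1$---exactly your computation $\prod_{k\ge1}(1+q^k)=\prod_{k\ge1}(1-q^{2k})/(1-q^k)=\prod_{k\ge1}1/(1-q^{2k-1})$. The Glaisher bijection you include as a sanity check is a nice bonus the paper only cites but does not reproduce.
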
 
Euler's theorem has a natural generalization due to Glaisher \cite[Corollary 1.3]{an}, where $\mathcal{A}=\{1\}$ in Theorem \ref{Euler} was replaced by $\mathcal{A}=\{1, 2, \cdots, d-1\}$. 
\begin{theorem}[Glaisher]\label{Glaisher} For any integer $d\ge2$, we have 
\[P(\{1, 2, \cdots, d-1\}; n) = Q\big(\mathbb{N}\setminus d\mathbb{N}; n\big). \] 
\end{theorem}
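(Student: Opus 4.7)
The plan is to prove Glaisher's theorem by comparing generating functions, which is the cleanest route and sets up notation that will be reused later. First I would write down the generating function for $P(\{1,2,\ldots,d-1\};n)$: since each part $k\ge 1$ may appear with multiplicity $0,1,\ldots,d-1$, one has
\[
\sum_{n\ge 0} P(\{1,2,\ldots,d-1\};n)\,q^n \;=\; \prod_{k\ge 1}\bigl(1+q^k+q^{2k}+\cdots+q^{(d-1)k}\bigr) \;=\; \prod_{k\ge 1}\frac{1-q^{dk}}{1-q^k},
\]
using the finite geometric sum formula.

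Next I would compute the generating function for $Q(\mathbb{N}\setminus d\mathbb{N};n)$. Partitions whose parts lie outside $d\mathbb{N}$ correspond to the product
\[
\sum_{n\ge 0} Q(\mathbb{N}\setminus d\mathbb{N};n)\,q^n \;=\; \prod_{\substack{k\ge 1 \\ d\,\ndiv\, k}}\frac{1}{1-q^k} \;=\; \prod_{k\ge 1}\frac{1-q^{dk}}{1-q^k},
\]
where the last equality follows from cancelling, in the full product $\prod_{k\ge 1}(1-q^k)^{-1}$, the factors indexed by multiples of $d$ against the numerator $\prod_{k\ge 1}(1-q^{dk})$. Comparing coefficients of $q^n$ in the two expressions yields the identity $P(\{1,2,\ldots,d-1\};n)=Q(\mathbb{N}\setminus d\mathbb{N};n)$.

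There is no real obstacle here; the only thing to be careful about is the bookkeeping that shows the two products genuinely agree, i.e.\ that the factors $1-q^{dk}$ in the numerator of the first identity cancel exactly the missing denominators in the second. As an aside, a bijective proof is also available: given a partition counted by $Q$, write the multiplicity $m_k$ of each part $k\in\mathbb{N}\setminus d\mathbb{N}$ in base $d$ as $m_k=\sum_{i\ge 0} a_{k,i}\,d^i$ with $0\le a_{k,i}\le d-1$, and replace the $m_k$ copies of $k$ by $a_{k,i}$ copies of $kd^i$ for each $i$. Since $d\nmid k$, the integers $kd^i$ are pairwise distinct as $(k,i)$ varies, and each appears with multiplicity at most $d-1$, giving a partition counted by $P$; the map is invertible by collecting parts according to their largest $d$-free divisor. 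I would, however, present the generating function argument as the main proof, since it generalizes most transparently to the Chinese remainder theorem framework used later in the paper.
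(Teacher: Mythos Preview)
Your proof is correct and matches the paper's approach: the paper derives Glaisher's theorem as an instance of Theorem~\ref{lemma} applied to the finite geometric series identity $1+x+\cdots+x^{d-1}=\dfrac{1-x^d}{1-x}$, which after substituting $x=q^k$ and taking the product over $k\ge1$ is exactly your generating function computation. Your aside on the base-$d$ bijection is a nice addition, but the analytic argument is the one the paper uses and generalizes.
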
 
Analogous to Euler's theorem, MacMahon \cite[p.54]{ma} proved another elegant partition theorem with $\mathcal{A}=\mathbb{N}\setminus\{1\}$, which states that the number of partitions of $n$ wherein no part appears with multiplicity one equals the number of partitions of $n$ where all parts must be even or congruent to $3 \pmod 6$. That is 
\begin{theorem}[MacMahon]\label{macmahon} We have 
\[P(\mathbb{N}\setminus\{1\}; n) = Q\bigg(2\mathbb{N}\cup (3\mathbb{N}\setminus 6\mathbb{N}); n\bigg). \] 
\end{theorem}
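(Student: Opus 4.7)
The plan is to establish the identity by comparing the generating functions of both sides and simplifying them to a common infinite product.

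For the left-hand side, I would observe that each distinct part $k \geq 1$ contributes a factor accounting for a multiplicity of $0$ or at least $2$:
\[\sum_{n\geq 0}P(\mathbb{N}\setminus\{1\};n)q^n = \prod_{k=1}^{\infty}\left(1+\sum_{d\geq 2}q^{dk}\right) = \prod_{k=1}^{\infty}\frac{1-q^k+q^{2k}}{1-q^k}.\]
The next step is to recognize the key factorization $1-q^k+q^{2k} = (1+q^{3k})/(1+q^k)$, which, together with $(1-q^k)(1+q^k)=1-q^{2k}$ and $1+q^{3k}=(1-q^{6k})/(1-q^{3k})$, collapses the product to
\[\prod_{k=1}^{\infty}\frac{1-q^{6k}}{(1-q^{2k})(1-q^{3k})}.\]

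For the right-hand side, the key observation is that $3\mathbb{N}\setminus 6\mathbb{N} = \{3(2m-1):m\geq 1\}$, i.e.\ the odd multiples of $3$. Hence
\[\sum_{n\geq 0}Q\bigl(2\mathbb{N}\cup(3\mathbb{N}\setminus 6\mathbb{N});n\bigr)q^n = \prod_{k=1}^{\infty}\frac{1}{1-q^{2k}}\cdot\prod_{m=1}^{\infty}\frac{1}{1-q^{3(2m-1)}}.\]
Splitting the multiples of $3$ into even and odd, the second factor equals $\prod_{k\geq 1}(1-q^{6k})/(1-q^{3k})$, so after multiplication the entire product matches the expression obtained for the left-hand side, proving the equality of coefficients.

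The main obstacle is essentially spotting the factorization $1-q^k+q^{2k}=(1+q^{3k})/(1+q^k)$, which is what forces the modulus $6$ to appear in MacMahon's theorem; without it, the LHS does not obviously telescope. Once this identity is in hand, everything reduces to the symmetric form $\prod_{k\geq 1}(1-q^{6k})/\bigl((1-q^{2k})(1-q^{3k})\bigr)$, and the remainder of the argument is routine bookkeeping of infinite products. One should expect the Chinese-remainder-theorem framework promised in the abstract to subsume this computation conceptually, by handling the modulus $6$ through its decomposition $6 = 2 \cdot 3$.
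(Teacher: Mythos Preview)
Your proof is correct and is essentially the same generating-function argument the paper uses: the paper packages the ``substitute $x=q^k$ and take the product over $k$'' step into its Theorem~\ref{lemma} and then invokes the single-variable identity $1+x^2+x^3+\cdots=\dfrac{1}{1-x^2}\cdot\dfrac{1-x^6}{1-x^3}$, which is exactly your per-part factor $\dfrac{1-q^k+q^{2k}}{1-q^k}=\dfrac{1-q^{6k}}{(1-q^{2k})(1-q^{3k})}$ with $x=q^k$. Your factorization $1-x+x^2=(1+x^3)/(1+x)$ is simply one route to verifying that identity, which the paper states without further comment.
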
 
MacMahon's theorem has many generalizations. In 1967, Andrews \cite{an1} generalized MacMahon's Theorem with $\mathcal{A}=\mathbb{N}\setminus\{1,3, \cdots,2r-1\}$ for a positive integer $r$. 
\begin{theorem}[Andrews]\label{andrews} 
For any $r\in \mathbb{N}$, we have 
\[P(\mathbb{N}\setminus\{1,3, \cdots,2r-1\}; n) = Q\bigg(2\mathbb{N}\cup ((2r+1)\mathbb{N}\setminus(4r+2)\mathbb{N}); n\bigg). \] 
\end{theorem}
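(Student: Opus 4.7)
The plan is to translate both sides of the identity into generating functions and reduce the resulting equation to Euler's theorem (Theorem~\ref{Euler}) under the substitution $y = x^{2r+1}$.

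First I would compute the generating function for $P(\mathbb{N}\setminus\{1,3,\dots,2r-1\}; n)$. The allowed multiplicities for each part are $\{0\}$ together with the even positive integers and all odd integers $\ge 2r+1$. For a fixed part size $k$, summing $x^{mk}$ over these $m$ splits as a sum over even $m\ge 0$ plus a sum over odd $m\ge 2r+1$, giving
\[
\sum_{j\ge 0} x^{2jk} + \sum_{j\ge 0} x^{(2r+1+2j)k} \;=\; \frac{1 + x^{(2r+1)k}}{1-x^{2k}}.
\]
Taking the product over all $k\ge 1$ yields
\[
\sum_{n\ge 0} P\bigl(\mathbb{N}\setminus\{1,3,\dots,2r-1\}; n\bigr)\,x^n \;=\; \prod_{k\ge 1}\frac{1+x^{(2r+1)k}}{1-x^{2k}}.
\]

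Next I would handle the right-hand side. Since $2r+1$ is odd, a multiple of $2r+1$ is even precisely when its cofactor is even, so $(2r+1)\mathbb{N}\setminus(4r+2)\mathbb{N}$ is exactly the set of odd multiples of $2r+1$; in particular it is disjoint from $2\mathbb{N}$. Consequently the generating function for $Q\bigl(2\mathbb{N}\cup((2r+1)\mathbb{N}\setminus(4r+2)\mathbb{N});n\bigr)$ factors as
\[
\prod_{k\ge 1}\frac{1}{1-x^{2k}}\;\cdot\;\prod_{j\ge 0}\frac{1}{1-x^{(2r+1)(2j+1)}}.
\]

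Equating the two generating functions and cancelling the common factor $\prod_{k\ge 1}(1-x^{2k})^{-1}$, the identity reduces to
\[
\prod_{k\ge 1}\bigl(1+x^{(2r+1)k}\bigr) \;=\; \prod_{j\ge 0}\frac{1}{1-x^{(2r+1)(2j+1)}}.
\]
Setting $y = x^{2r+1}$, this becomes the classical identity $\prod_{k\ge 1}(1+y^k) = \prod_{j\ge 0}(1-y^{2j+1})^{-1}$, i.e.\ the generating function form of Euler's theorem (Theorem~\ref{Euler}) applied to the variable $y$. The main obstacle is purely bookkeeping: correctly identifying $(2r+1)\mathbb{N}\setminus(4r+2)\mathbb{N}$ with the odd multiples of $2r+1$, and cleanly splitting the per-part sum into the even-multiplicity geometric series and the tail of high odd-multiplicity terms so that the numerator collapses to $1+x^{(2r+1)k}$. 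Once these are handled, the proof is automatic from Euler's identity.
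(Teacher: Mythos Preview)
Your argument is correct. It is very close to the paper's proof but packaged a little differently.

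The paper derives Andrews' theorem as an instance of its general Theorem~\ref{lemma}: from the per-part identity
\[
1+\sum_{a\in\mathbb{N}\setminus\{1,3,\dots,2r-1\}}x^{a}
=\frac{1}{1-x^{2}}\cdot\frac{1-x^{4r+2}}{1-x^{2r+1}}
\]
one substitutes $x\mapsto q^{k}$, takes the product over $k\ge1$, and then reads off $\mathcal{B}=2\mathbb{N}\cup\bigl((2r+1)\mathbb{N}\setminus(4r+2)\mathbb{N}\bigr)$ directly from the data $B=\{2,2r+1\}$, $\delta_{2}=\infty$, $\delta_{2r+1}=2$ via the uniform telescoping $\prod_{n}\frac{1-q^{b\delta_b n}}{1-q^{bn}}=\prod_{i=1}^{\delta_b-1}\prod_{n}(1-q^{b\delta_b n+bi})^{-1}$. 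Your per-part computation $\frac{1+x^{(2r+1)k}}{1-x^{2k}}$ is exactly the same identity (since $\frac{1-x^{4r+2}}{1-x^{2r+1}}=1+x^{2r+1}$), so the two generating functions for $P$ agree. The only difference is the final step: instead of appealing to the general telescoping lemma, you cancel the common factor $\prod_{k}(1-x^{2k})^{-1}$ and reduce the residual equality to Euler's theorem in the dilated variable $y=x^{2r+1}$. Both conclusions are valid; the paper's route has the advantage of treating Theorems~\ref{Euler}--\ref{nm} uniformly, while yours has the pleasant feature of exhibiting Andrews' theorem as Euler's theorem for the $(2r+1)$-multiples sitting on top of an unrestricted even-part factor.
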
 
A further improvement of Andrews' theorem was made by Subbarao \cite{su} in 1971, where the set $\mathcal{A}$ is a union of two sets, one is a subset of odd numbers and the other is a subset of even numbers. 
\begin{theorem}[Subbarao]\label{subbarao} 
Let $l > 1, r \ge 0$ be integers. Then 
\[P(\{2i+j(2r+1): i=0, 1, \cdots, l-1; j=0, 1\}; n)\] 
\[ = Q\bigg((2\mathbb{N}\setminus 2l\mathbb{N})\cup ((2r+1)\mathbb{N}\setminus(4r+2)\mathbb{N}); n\bigg). \] 
\end{theorem}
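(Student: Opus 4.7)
The plan is to prove Subbarao's theorem by computing the generating functions of both sides and showing that they coincide as formal power series in $q$, after which equality of coefficients is automatic. I do not intend to give a bijective proof.

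First I would analyze the multiplicity set $\mathcal{A}=\{2i+j(2r+1):0\le i\le l-1,\ j\in\{0,1\}\}$. Since $2r+1$ is odd while each $2i$ is even, the elements with $j=0$ are all even and those with $j=1$ are all odd, so the representation of each element of $\mathcal{A}$ by the pair $(i,j)$ is unique and $\mathcal{A}$ splits as a disjoint union of an even block and an odd block. Consequently the local multiplicity polynomial factors, for each part size $k\ge 1$, as
\[
\sum_{m\in\mathcal{A}}q^{km}=\bigl(1+q^{2k}+q^{4k}+\cdots+q^{(2l-2)k}\bigr)\bigl(1+q^{(2r+1)k}\bigr)=\frac{(1-q^{2lk})(1-q^{(4r+2)k})}{(1-q^{2k})(1-q^{(2r+1)k})}.
\]
Taking the product over $k\ge 1$ yields
\[
\sum_{n\ge 0}P(\mathcal{A};n)\,q^n=\prod_{k\ge 1}\frac{(1-q^{2lk})(1-q^{(4r+2)k})}{(1-q^{2k})(1-q^{(2r+1)k})}.
\]

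Next I would compute the generating function of $Q(\mathcal{B};n)$ with $\mathcal{B}=(2\mathbb{N}\setminus 2l\mathbb{N})\cup((2r+1)\mathbb{N}\setminus(4r+2)\mathbb{N})$. The two summand sets are disjoint, since the first consists of even numbers while the second consists of odd multiples of $2r+1$ (as $(4r+2)\mathbb{N}$ is precisely the set of even multiples of $2r+1$). Using the disjoint decompositions $2\mathbb{N}=(2\mathbb{N}\setminus 2l\mathbb{N})\sqcup 2l\mathbb{N}$ and $(2r+1)\mathbb{N}=((2r+1)\mathbb{N}\setminus(4r+2)\mathbb{N})\sqcup(4r+2)\mathbb{N}$, the partial products telescope to
\[
\prod_{k\in 2\mathbb{N}\setminus 2l\mathbb{N}}\frac{1}{1-q^k}=\prod_{k\ge 1}\frac{1-q^{2lk}}{1-q^{2k}},\qquad \prod_{k\in (2r+1)\mathbb{N}\setminus(4r+2)\mathbb{N}}\frac{1}{1-q^k}=\prod_{k\ge 1}\frac{1-q^{(4r+2)k}}{1-q^{(2r+1)k}}.
\]
Multiplying these two expressions reproduces precisely the right-hand side of the $P(\mathcal{A};n)$-formula above, and equating coefficients of $q^n$ finishes the proof.

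The main obstacle is really only the first observation: recognizing that the specific arithmetic-progression structure of $\mathcal{A}$ forces a multiplicative splitting into an even factor and an odd factor on the multiplicity side, with the two factors matching the two summands of $\mathcal{B}$. Once that factorization is in hand, the matching product on the parts side is standard manipulation of $q$-products over complementary arithmetic progressions, and no deeper partition combinatorics is required.
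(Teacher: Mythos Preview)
Your proof is correct and follows essentially the same route as the paper. The paper isolates the general mechanism as Theorem~\ref{lemma} (a polynomial identity $1+\sum_{a\in A}x^a=\prod_b\frac{1-x^{b\delta_b}}{1-x^b}$ together with disjointness of the sets $b\mathbb{N}\setminus b\delta_b\mathbb{N}$ implies $P(\mathcal{A};n)=Q(\mathcal{B};n)$) and then obtains Subbarao's theorem in Remark~2.2 from the single identity $\sum_{i=0}^{l-1}x^{2i}+\sum_{i=0}^{l-1}x^{2i+2r+1}=\frac{1-x^{2l}}{1-x^2}\cdot\frac{1-x^{4r+2}}{1-x^{2r+1}}$; your argument simply carries out that same generating-function computation directly for this specific case.
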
 
Note that Subbarao's theorem reduces to Andrews' theorem as $l$ tends to infinity and $2l\mathbb{N}$ is replaced by $\emptyset$ in Theorem \ref{subbarao}. 

Subbarao's theorem was covered by Nyirenda and Mugwangwavari in their recent work \cite{nm}. They found the set $\mathcal{A}$ may be a union of $m$ sets, each belongs to one of residue classes modulo $m$. 
\begin{theorem}[Nyirenda and Mugwangwavari]\label{nm} 
Assume that $l,\, r$, $a$ and $p$ are positive integers such that 
gcd$(a, p) = 1$. Then 
\[P(\{pi+j(pr+a): i=0, 1, \cdots, l-1; j=0, 1,\cdots,p-1\}; n)\] 
\[= Q\bigg((p\mathbb{N}\setminus lp\mathbb{N})\bigcup ((pr+a)\mathbb{N}\setminus p(pr+a)\mathbb{N}); n\bigg). \] 
\end{theorem}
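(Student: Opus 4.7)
The plan is to prove Theorem \ref{nm} by matching generating functions. Write $\mathcal{A}=\{pi+j(pr+a):0\le i\le l-1,\ 0\le j\le p-1\}$ and $\mathcal{B}=(p\mathbb{N}\setminus lp\mathbb{N})\cup((pr+a)\mathbb{N}\setminus p(pr+a)\mathbb{N})$. The standard product expansions
\[
\sum_{n\ge 0}P(\mathcal{A};n)q^n=\prod_{k=1}^{\infty}\sum_{d\in\mathcal{A}\cup\{0\}}q^{kd},\qquad \sum_{n\ge 0}Q(\mathcal{B};n)q^n=\prod_{m\in\mathcal{B}}\frac{1}{1-q^m}
\]
reduce the problem to showing both sides equal the common infinite product
\[
\Pi(q):=\prod_{k=1}^{\infty}\frac{(1-q^{klp})(1-q^{kp(pr+a)})}{(1-q^{kp})(1-q^{k(pr+a)})}.
\]

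For the $P$-side I would first verify that $(i,j)\mapsto pi+j(pr+a)$ is injective on $\{0,\ldots,l-1\}\times\{0,\ldots,p-1\}$. If $pi_1+j_1(pr+a)=pi_2+j_2(pr+a)$, then reducing modulo $p$ yields $(j_1-j_2)a\equiv 0\pmod{p}$; since $\gcd(a,p)=1$, this Chinese-remainder input forces $j_1=j_2$, whence $i_1=i_2$. With injectivity in hand the inner sum factors as
\[
\sum_{d\in\mathcal{A}\cup\{0\}}q^{kd}=\sum_{i=0}^{l-1}q^{kpi}\cdot\sum_{j=0}^{p-1}q^{kj(pr+a)}=\frac{1-q^{klp}}{1-q^{kp}}\cdot\frac{1-q^{kp(pr+a)}}{1-q^{k(pr+a)}},
\]
and taking the product over $k$ recovers $\Pi(q)$.

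For the $Q$-side I would first show that the two pieces defining $\mathcal{B}$ are disjoint: a common element would be divisible by both $p$ and $pr+a$, hence by $p(pr+a)$ because $\gcd(p,pr+a)=\gcd(p,a)=1$, contradicting its exclusion from $p(pr+a)\mathbb{N}$. Disjointness factors $\prod_{m\in\mathcal{B}}(1-q^m)^{-1}$ as the product of two pieces, each of the shape $\prod_{m\in S\setminus T}(1-q^m)^{-1}=\prod_{m\in T}(1-q^m)\big/\prod_{m\in S}(1-q^m)$ for $T\subseteq S$. Applied with $(S,T)=(p\mathbb{N},lp\mathbb{N})$ and then $(S,T)=((pr+a)\mathbb{N},p(pr+a)\mathbb{N})$, this produces precisely $\Pi(q)$.

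Both halves of the argument hinge on the single arithmetic fact $\gcd(p,pr+a)=1$, which is equivalent to the hypothesis $\gcd(a,p)=1$: it supplies injectivity on the $P$-side and disjointness on the $Q$-side. Beyond this input the remaining work is purely formal manipulation of product expansions, so I do not anticipate a genuine obstacle---only the care required to verify that $\mathcal{A}$ contains $0$ via $(i,j)=(0,0)$ and that the $lp$ parametrized values are the $lp$ distinct elements of $\mathcal{A}$.
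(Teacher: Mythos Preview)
Your proposal is correct and follows essentially the same route as the paper: the paper's Theorem~\ref{lemma} packages exactly the reduction you carry out (the geometric-series identity in one variable, plus the disjointness condition on $\mathcal{B}$), and Remark~2.2 records the specific identity $\sum_{i,j}x^{pi+(pr+a)j}=\frac{1-x^{lp}}{1-x^p}\cdot\frac{1-x^{p(pr+a)}}{1-x^{pr+a}}$ you derive. Your explicit verification of injectivity of $(i,j)\mapsto pi+j(pr+a)$ and of the disjointness of the two pieces of $\mathcal{B}$, both via $\gcd(p,pr+a)=1$, mirrors the corresponding checks in the paper's proof of the more general Theorem~\ref{mainth}.
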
 
In this paper we reveal a general principle underlying Theorems 1.1-1.6, i.e. there is a correspondence between partition identities   (\ref{identity}) and identities of some geometric series (see Theorem \ref{lemma} below). With this principle, we are able to utilize the Chinese remainder theorem to establish a more comprehensive partition theorem, which implies Theorem 1.1-1.6. 
\begin{theorem}\label{mainth}Let $k, l$ and $s$ be positive integers. 
Suppose that $m_1, m_2, \cdots, m_s\in\mathbb{N}$ and $a_1, a_2, \cdots, a_s\in\mathbb{N}$ satisfy 
\begin{equation}\label{coprime} 
(m_i, m_j)=1, i\neq j, \text{ and } (a_i,m_i)=1, i, j=1, 2, \cdots, s. 
\end{equation} 
Let $m$ and $M_i$ denote 
\[m=m_1m_2\cdots m_s, \quad M_i=\frac{m}{m_i},\quad i=1, 2, \cdots, s\] 
and $\overline{M}_i$ be a positive integer such that 
\begin{equation}\label{inverse} 
\overline{M}_iM_i\equiv1\pmod{m_i}. 
\end{equation} 
Let $r_1, r_2, \cdots, r_{s}$ be positive integers satisfying 
\begin{equation}\label{r} 
r_i\equiv a_iM_i\overline{M}_i\pmod m, i=1, 2, \cdots, s. 
\end{equation} 
Define two sets $\mathcal{A}$ and $\mathcal{B}$ 
\begin{align*} 
\mathcal{A}:=\{r_1j_1+r_2j_2+\cdots +r_sj_s+mkj_{s+1}:\quad 
&0\le j_i\le m_i-1, i=1,2,\cdots,s,\\ 
&0\le j_{s+1}\le l-1 
\} 
\end{align*} 
and 
\[\mathcal{B}:=(mk\mathbb{N}\setminus mkl\mathbb{N}) 
\bigcup\left(\bigcup_{i=1}^s(r_i\mathbb{N}\setminus r_im_i\mathbb{N})\right).\] 
Then 
\[P(\mathcal{A},n)=Q(\mathcal{B},n).\] 
\end{theorem}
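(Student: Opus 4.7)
The plan is to attack the identity through generating functions, which is the natural setting. Writing
\[
\sum_{n\ge 0}P(\mathcal A;n)q^{n}=\prod_{t\ge 1}\Bigl(1+\sum_{a\in\mathcal A,\,a>0}q^{at}\Bigr),\qquad
\sum_{n\ge 0}Q(\mathcal B;n)q^{n}=\prod_{b\in\mathcal B}\frac{1}{1-q^{b}},
\]
the theorem reduces to verifying equality of these two products, and my goal would be to derive that equality from a \emph{finite} geometric identity for $\mathcal{A}$ alone, combined with a telescoping identity that reproduces $\mathcal{B}$. This reduction is presumably the content of the forthcoming Theorem \ref{lemma}.

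The first step is to analyse $\mathcal{A}$. From \eqref{r} together with \eqref{coprime} and \eqref{inverse} one reads off that $r_{i}\equiv a_{i}\pmod{m_{i}}$ and $r_{i}\equiv 0\pmod{m_{j}}$ for $j\ne i$. Reducing $r_{1}j_{1}+\cdots+r_{s}j_{s}$ modulo $m_{i}$ therefore returns $a_{i}j_{i}\pmod{m_{i}}$, and since $(a_{i},m_{i})=1$ this determines $j_{i}\in\{0,1,\dots,m_{i}-1\}$ uniquely. The term $mkj_{s+1}$ is a multiple of $m$, so it is recovered after the $j_{i}$'s by an integer division. Hence the listed elements of $\mathcal{A}$ are pairwise distinct, and the finite geometric expansion gives
\[
1+\sum_{a\in\mathcal{A},\,a>0}q^{a}=\prod_{i=1}^{s}\frac{1-q^{r_{i}m_{i}}}{1-q^{r_{i}}}\cdot\frac{1-q^{mkl}}{1-q^{mk}}.
\]

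Next, substituting $q\mapsto q^{t}$ and taking the product over $t\ge1$, each rational factor telescopes into an infinite product: for instance
\[
\prod_{t\ge 1}\frac{1-q^{r_{i}m_{i}t}}{1-q^{r_{i}t}}=\prod_{n\in r_{i}\mathbb{N}\setminus r_{i}m_{i}\mathbb{N}}\frac{1}{1-q^{n}},
\]
and similarly for the $mk,mkl$ pair. Thus the $P$-side generating function equals $\prod_{b}(1-q^{b})^{-1}$ taken over the union (with multiplicity) of the $s+1$ sets $mk\mathbb{N}\setminus mkl\mathbb{N}$ and $r_{i}\mathbb{N}\setminus r_{i}m_{i}\mathbb{N}$ for $1\le i\le s$. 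It remains to show this union is in fact disjoint, hence equal to $\mathcal{B}$: a typical element $r_{i}q$ of $r_{i}\mathbb{N}\setminus r_{i}m_{i}\mathbb{N}$ satisfies $m_{i}\nmid q$, hence $r_{i}q\equiv a_{i}q\not\equiv 0\pmod{m_{i}}$, while every element of $r_{j}\mathbb{N}$ for $j\ne i$ and every element of $mk\mathbb{N}$ is divisible by $m_{i}$. This gives the required pairwise disjointness and finishes the identification with $\sum_{n}Q(\mathcal{B};n)q^{n}$.

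The main obstacle, as far as I can see, is purely bookkeeping around the Chinese remainder step: showing cleanly that the map $(j_{1},\dots,j_{s+1})\mapsto r_{1}j_{1}+\cdots+r_{s}j_{s}+mkj_{s+1}$ is injective on the prescribed box, and dually that the $s+1$ constituents of $\mathcal{B}$ are pairwise disjoint. Both facts reduce to the single observation that $r_{i}$ is invertible modulo $m_{i}$ while being divisible by every other $m_{j}$, so once Theorem \ref{lemma} has been set up, the remainder of the proof is essentially mechanical.
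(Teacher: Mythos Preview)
Your proposal is correct and follows essentially the same route as the paper: you establish the finite geometric identity
\[
1+\sum_{a\in\mathcal{A},\,a>0}x^{a}=\prod_{i=1}^{s}\frac{1-x^{r_{i}m_{i}}}{1-x^{r_{i}}}\cdot\frac{1-x^{mkl}}{1-x^{mk}}
\]
via the CRT observation that $r_{i}\equiv a_{i}\pmod{m_{i}}$ while $m_{j}\mid r_{i}$ for $j\neq i$, and then invoke the general principle of Theorem~\ref{lemma} after checking pairwise disjointness of the constituent arithmetic progressions in $\mathcal{B}$ by the same divisibility fact. The only structural difference is presentational: the paper packages the ``substitute $q\mapsto q^{t}$ and telescope'' step once and for all into Theorem~\ref{lemma}, whereas you spell it out inline, but the content is identical.
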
 
It is obvious that a special case  of Theorem \ref{mainth} that $k=s=1$ yields Theorem \ref{nm}. Theorem \ref{mainth} is essentially  an algorithm that inputs $ k, l$ and $ m_i, a_i, r_i$ for $ i=1, 2, \cdots s$ and outputs the sets $\mathcal{A}$ and $ \mathcal{B}$ such that $P(\mathcal{A},n)=Q(\mathcal{B},n)$. We give an explicit example to illustrate this theorem. 
\begin{example} 
Let $(m_1, m_2, m_3)=(2, 3, 5), (a_1, a_2, a_3)=(1, 1, 1)$ and $l=1$ in Theorem \ref{mainth}. Then $m=30$ and 
\begin{align*} 
&(M_1, M_2, M_3) =(15, 10, 6), \\ 
&(\overline{M}_1,\overline{M}_2,\overline{M}_3) =(1, 1, 1),\\ 
&r_1 \equiv15\pmod{30},\\ 
&r_2 \equiv10\pmod{30},\\ 
&r_3 \equiv6\pmod{30}. 
\end{align*} 
Take $(r_1, r_2, r_3)=(15, 10, 6)$. By Theorem \ref{mainth}, the set $\mathcal{A}$ is 
\begin{align*}\mathcal{A}=\{&6, 10, 12, 15, 16, 18, 20, 21, 22, 24, 25, 26, 27, 28, 31,\\& 32, 33,34, 35, 37, 38, 39, 41, 43, 44, 47, 49, 53, 59\}, 
\end{align*} 
which is the set of exponents of $x$ 
in the expansion of the polynomial 
\[(1+x^{15})(1+x^{10}+x^{20})(1+x^6+x^{12}+x^{18}+x^{24}).\] 
The set $\mathcal{B}$ is 
\begin{align*} 
\mathcal{B} &=(15\mathbb{N}\setminus30\mathbb{N}) 
\cup(10\mathbb{N}\setminus30\mathbb{N})\cup(6\mathbb{N}\setminus30\mathbb{N}) \\ 
&=\{n\in\mathbb{N}: n\equiv15, 10, 20, 6, 12, 18, 24\pmod{30}.\} 
\end{align*} 
Theorem \ref{mainth} shows that the number of partitions of $n$ in which the multiplicities of the parts belong to $\mathcal{A}$ is equal to the number of partitions of $n$ whose parts belong to $\mathcal{B}$. 
\end{example} 
The original proofs of Theorems \ref{Euler}-\ref{nm} are based on generating functions. A beautiful bijection proofs for Theorem \ref{Euler} was presented in \cite{an}. The bijection proofs for Theorems \ref{macmahon}, \ref{andrews} and Theorem \ref{subbarao} were given in \cite{aepr, fs, gu, kdsk}, and for Theorem \ref{nm} were obtained in \cite{ny} and \cite{nm} recently. 
Our proof of Theorem \ref{mainth} is also based on generating functions. However, we believe there exist bijection proofs similar to those for Theorems \ref{Euler}-\ref{nm}. 

Note that the partition functions $P(\mathcal{A}, n)$ and $P(\mathcal{B}, n)$ given in Theorem \ref{mainth} depend on the parameters $ k, l$ and $ m_i, a_i, r_i$, $ i=1, 2, \cdots s$. We shall see  that the generating function for $P(\mathcal{A}, n)$ given by (\ref{generating}) is a quotient of infinite products. A simple observation leads to unexpected congruence properties of $P(\mathcal{A}, n)$. 
\begin{theorem}\label{corollary} 
Let $p(n)$ be the ordinary partition function. Under the assumption of Theorem \ref{mainth}, if there exist $m_i$ and positive integers $c, d$ such that 
\[p(m_in+c)\equiv0\pmod d\] 
for all $n\ge0$, then 
\[P(\mathcal{A}, m_in+a_ic)\equiv0\pmod d\] 
for all $n\ge0$. 
\end{theorem}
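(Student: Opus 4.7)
The plan is to work with the generating function
\[G(q) := \sum_{N \ge 0} P(\mathcal{A}, N)\, q^N = \frac{f_{mkl}}{f_{mk}} \prod_{j=1}^{s} \frac{f_{r_j m_j}}{f_{r_j}},\]
where $f_e := \prod_{n \ge 1}(1 - q^{ne})$ for any positive integer $e$. This is the formula~(\ref{generating}), and it comes from grouping $\mathcal{B}$ into its constituent blocks (each block of the form $e\mathbb{N}\setminus ef\mathbb{N}$ contributes a ratio $f_{ef}/f_e$, and one checks from $r_i \equiv a_i \pmod{m_i}$, $r_i \equiv 0 \pmod{m_j}$ for $j\ne i$, that these blocks are pairwise disjoint). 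For the index $i$ appearing in the hypothesis, I would isolate the factor $1/f_{r_i}$ and use Euler's identity $1/f_{r_i} = \sum_{u \ge 0} p(u)\, q^{u r_i}$ to write
\[G(q) = H(q) \cdot \sum_{u \ge 0} p(u)\, q^{u r_i}, \qquad H(q) := f_{r_i m_i} \cdot \prod_{j \neq i} \frac{f_{r_j m_j}}{f_{r_j}} \cdot \frac{f_{mkl}}{f_{mk}}.\]

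The crucial step is to verify that $H(q)$ is a power series in $q^{m_i}$, i.e.\ that $m_i$ divides every exponent appearing in any factor of $H$. Trivially $m_i \mid r_i m_i$, $m_i \mid mk$, and $m_i \mid mkl$. For the factors indexed by $j \neq i$, I would unwind the CRT data of Theorem~\ref{mainth}: since $M_j = m/m_j$ and $i \neq j$, the entire prime-power contribution of $m_i$ survives in $M_j$, so $m_i \mid M_j$; combined with $r_j \equiv a_j M_j \overline{M}_j \pmod{m}$ and $m_i \mid m$, this forces $m_i \mid r_j$, hence also $m_i \mid r_j m_j$. Consequently $H(q) = \widetilde{H}(q^{m_i})$ for some formal power series $\widetilde{H}$.

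Extracting the coefficient of $q^N$ at $N = m_i n + a_i c$ now gives
\[P(\mathcal{A}, N) = \sum_{u \ge 0} p(u)\, [q^{N - u r_i}]\, H(q),\]
and only indices $u$ with $m_i \mid (N - u r_i)$ can contribute. Using $r_i \equiv a_i \pmod{m_i}$ (from~(\ref{r})) together with $\gcd(a_i, m_i) = 1$, this divisibility collapses to $u \equiv c \pmod{m_i}$, so the surviving $u$'s are precisely the non-negative integers in the class $c \pmod{m_i}$. The hypothesis $p(m_i n + c) \equiv 0 \pmod d$ then forces $p(u) \equiv 0 \pmod d$ for every such $u$, and the whole sum vanishes modulo $d$.

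The main obstacle is the divisibility analysis in the middle step: verifying, through the CRT construction, that $m_i \mid r_j$ for every $j \neq i$. Once this structural fact is in hand, the conclusion is a mechanical coefficient extraction from the product of two formal power series.
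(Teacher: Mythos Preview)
Your proposal is correct and follows essentially the same approach as the paper: both isolate the factor $\prod_{n\ge1}(1-q^{r_in})^{-1}=\sum_u p(u)q^{ur_i}$ from the generating function~(\ref{generating}), observe via the CRT relations that the remaining cofactor is a power series in $q^{m_i}$, and then extract the coefficient of $q^{m_in+a_ic}$ using $r_i\equiv a_i\pmod{m_i}$ and $\gcd(a_i,m_i)=1$ to force $u\equiv c\pmod{m_i}$. The only cosmetic difference is that the paper cites the divisibility $m_i\mid r_j$ for $j\neq i$ from~(\ref{ri}) (already established in the proof of Theorem~\ref{mainth}), whereas you re-derive it directly from $r_j\equiv a_jM_j\overline{M}_j\pmod m$ and $m_i\mid M_j$.
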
 
Theorem \ref{corollary} shows that congruences for $p(n)$ yield  congruences for $P(\mathcal{A}, n)$. To obtain such congruences, we need explicit congruences for $p(n)$. 
Weaver \cite{we} found 76,065 explicit congruences for the partition function of the form $p(mn+c)\equiv 0\pmod d$ for primes $13\le d\le 37$. One can use these congruences for $p(n)$ to deduce explicit congruences for $P(\mathcal{A}, n)$. Let us finally mention that Ahlgren \cite{ah} had proven that 
if $d$ is a positive integer coprime to 6, then there exist infinitely 
many arithmetic progressions $mn+c$ (none of which is contained in any other) 
with the property that $p(mn+c)\equiv 0\pmod d$ for all $n\ge0$. Here we use Theorem \ref{corollary} to illustrate a congruence for $P(\mathcal{A}, n)$ given in Example 1.8. 
\begin{example} 
Note that $m_3=5$ and $a_3=1$ in Example 1.8. Recall that the famous Ramanujan's congruence for $p(n)$ modulo $5$ states that 
\[p(5n+4)\equiv0\pmod5, n\ge0.\] 
Thus Theorem \ref{corollary} shows that 
\[p(\mathcal{A},5n+4)\equiv0\pmod5\] 
for all $n\ge0$. 
\end{example}
We shall prove Theorem \ref{mainth} and Theorem \ref{corollary} in Section 2 and present a natural generalization of Euler's theorem in Section 3.

\section{Proofs of Theorem \ref{mainth} and Theorem \ref{corollary}} 
In this section we shall prove Theorem \ref{mainth} and Theorem \ref{corollary}. The following theorem shows that the existence of such partition theorems as Theorems \ref{Euler}-\ref{mainth} are essentially equivalent to the existence of some (finite) geometric series identities. 
\begin{theorem}\label{lemma} 
Let $A \subseteq\mathbb{N}$ be a set of natural numbers and $B \subseteq\mathbb{N}$ be a finite set of natural numbers. Let $\{\delta_b\in\mathbb{N}: b\in B\}$ be a set associated with $B$. Suppose that there is an identity of  (finite) geometric series 
\begin{equation}\label{poly} 
1+\sum_{a\in A}x^a=\prod_{b\in B}\frac{1-x^{b\delta_b}}{1-x^b}. 
\end{equation} 
Define $\mathcal{A}:=A$ and 
\begin{equation}\label{B} 
\mathcal{B}:=\bigcup_{b\in B}(b\mathbb{N}\setminus b\delta_{b}\mathbb{N}). 
\end{equation} 
If for any $b, b'\in B$ with $b\neq b'$, 
\begin{equation}\label{set} 
(b\mathbb{N}\setminus b\delta_{b}\mathbb{N})\bigcap (b'\mathbb{N}\setminus b'\delta_{b'}\mathbb{N})=\emptyset, 
\end{equation} 
then 
\[P(\mathcal{A},n)=Q(\mathcal{B},n).\] 
Here, as $\delta_b$ tends to $\infty$, we need to replace $x^{b\delta_b}$ by 0 in (\ref{poly}), and replace $b\delta_{b}\mathbb{N}$ by $\emptyset$ in (\ref{B}) and (\ref{set}). 
\end{theorem}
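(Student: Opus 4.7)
The plan is to translate the claimed equality $P(\mathcal{A},n)=Q(\mathcal{B},n)$ into an identity of formal power series, and then to establish that identity by manipulating the hypothesized geometric series identity \eqref{poly}. The starting point is the standard observation that since the multiplicity of each potential part $k\ge 1$ can independently be either $0$ or any element of $A$, one has
\[
\sum_{n\ge0}P(\mathcal{A},n)x^n=\prod_{k=1}^\infty\Bigl(1+\sum_{a\in A}x^{ka}\Bigr).
\]
The corresponding generating function on the other side is the familiar $\sum_{n\ge0}Q(\mathcal{B},n)x^n=\prod_{n\in\mathcal{B}}(1-x^n)^{-1}$, so the entire theorem reduces to showing these two products agree.

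My first move is to substitute $x\mapsto x^k$ in \eqref{poly} and insert the result factor by factor, turning the generating function into
\[
\sum_{n\ge0}P(\mathcal{A},n)x^n=\prod_{k=1}^\infty\prod_{b\in B}\frac{1-x^{kb\delta_b}}{1-x^{kb}}=\prod_{b\in B}\prod_{k=1}^\infty\frac{1-x^{kb\delta_b}}{1-x^{kb}},
\]
where the interchange of products is legitimate in the formal power series ring because $B$ is finite and each factor is of the form $1+O(x)$. Next, for each fixed $b\in B$, I would simplify the inner product: since $\{kb:k\ge1\}=b\mathbb{N}$ and $\{kb\delta_b:k\ge1\}=b\delta_b\mathbb{N}\subseteq b\mathbb{N}$, the numerator factors $(1-x^n)$ with $n\in b\delta_b\mathbb{N}$ cancel exactly the corresponding denominator factors indexed by the same $n$'s, leaving
\[
\prod_{k=1}^\infty\frac{1-x^{kb\delta_b}}{1-x^{kb}}=\prod_{n\in b\mathbb{N}\setminus b\delta_b\mathbb{N}}\frac{1}{1-x^n}.
\]

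Finally, I would invoke the disjointness hypothesis \eqref{set}: the sets $b\mathbb{N}\setminus b\delta_b\mathbb{N}$ for $b\in B$ are pairwise disjoint and their union is $\mathcal{B}$ by \eqref{B}, so
\[
\prod_{b\in B}\prod_{n\in b\mathbb{N}\setminus b\delta_b\mathbb{N}}\frac{1}{1-x^n}=\prod_{n\in\mathcal{B}}\frac{1}{1-x^n}=\sum_{n\ge0}Q(\mathcal{B},n)x^n,
\]
and comparing coefficients of $x^n$ gives the theorem. The degenerate case $\delta_b\to\infty$ is absorbed by the conventions stated in the theorem: the numerator in \eqref{poly} becomes $1$, and correspondingly $b\delta_b\mathbb{N}$ is empty, so the cancellation step is vacuous and the remaining manipulations go through unchanged. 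I do not anticipate any real obstacle; the only point needing modest care is the cancellation, where one must note that each $n\in b\delta_b\mathbb{N}$ appears with multiplicity one in both products so the cancellation is exact. The whole argument is a short generating function computation whose cleanliness is the entire payoff of formulating the principle via \eqref{poly} in the first place.
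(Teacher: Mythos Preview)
Your proof is correct and follows essentially the same route as the paper's: substitute $x\mapsto q^n$ (your $x\mapsto x^k$) into the geometric series identity, take the product over all $n\ge1$, cancel the factors indexed by $b\delta_b\mathbb{N}$ against those in $b\mathbb{N}$, and identify the result as the generating function for $Q(\mathcal{B},n)$. If anything, your write-up is slightly more explicit than the paper's in pointing out exactly where the disjointness hypothesis \eqref{set} is invoked.
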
 
\begin{proof} 
Replacing $x$ with $q^n$ in (\ref{poly}) and taking product over $n=1, 2,\cdots$, we obtain 
\begin{align*} 
\sum_{n=0}^{\infty}P(\mathcal{A},n)q^n&= 
\prod_{n=1}^{\infty}\bigg(1+\sum_{a\in A}q^{an}\bigg)\\ 
&=\prod_{n=1}^{\infty}\prod_{b\in B}\frac{1-q^{b\delta_bn}}{1-q^{bn}}\\ 
&=\prod_{b\in B}\prod_{i_b=1}^{\delta_b-1}\prod_{n=1}^{\infty}\frac{1}{1-q^{b\delta_b n+bi_b}}\\ 
&=\sum_{n=0}^{\infty}Q(\mathcal{B},n)q^n, 
\end{align*} 
where, since the cardinality of $B$ is finite, the infinite products above are convergent for $|q|<1$ (see \cite[p.5]{an}). 
\end{proof} 
\begin{remark} 
Theorem \ref{lemma} gives a quick proof of Theorems 1.1-1.6. For example, Theorem \ref{macmahon} follows from the identity 
\[1+x^2+x^3+\cdots=1+\sum_{i=2}^\infty x^i=\frac{1}{1-x^2}\cdot\frac{1-x^6}{1-x^3}.\] 
Theorem \ref{andrews} follows from the identity 
\[1+\sum_{i=0}^\infty x^{2r+1+2i}=\frac{1}{1-x^2}\cdot\frac{1-x^{4r+2}}{1-x^{2r+1}}.\] 
Theorem \ref{subbarao} follows from the identity 
\[\sum_{i=0}^{m-1} x^{2i}+\sum_{i=0}^{m-1} x^{2i+2r+1}=\frac{1-x^{2m}}{1-x^2}\cdot\frac{1-x^{4r+2}}{1-x^{2r+1}}.\] 
Theorem \ref{nm} follows from 
\[\sum_{i=0}^{m-1}\sum_{j=0}^{p-1}x^{pi+(pr+a)j} 
=\frac{1-x^{pm}}{1-x^p}\cdot\frac{1-x^{p(pr+a)}}{1-x^{pr+a}}.\] 
\end{remark} 
We now apply Theorem \ref{lemma} to prove Theorem \ref{mainth}. 
\begin{proof}[Proof of Theorem \ref{mainth}] 
We expand the following the (finite) geometric series 
\begin{align}\label{main} 
\bigg(\prod_{i=1}^{s}\frac{1-x^{m_ir_i}}{1-x^{r_i}}\bigg) 
\bigg(\frac{1-x^{mkl}}{1-x^{mk}}\bigg) 
&=\bigg(\prod_{i=1}^{s} 
\left(\sum_{j_i=0}^{m_i-1}x^{r_ij_i}\right)\bigg)\left(\sum_{j_{s+1}=0}^{l-1}x^{mkj_{s+1}}\right)\\ 
&=\sum_{j_1=0}^{m_1-1}\sum_{j_2=0}^{m_2-1}\cdots \sum_{j_s=0}^{m_s-1}\sum_{j_{s+1}=0}^{l-1}x^{r_1j_1+r_2j_2+\cdots r_sj_s+mkj_{s+1}}\notag. 
\end{align} 
We first show the exponents of $x$ above are distinct, which means that the set $\mathcal{A}$ is not a multiset. To see this, suppose that \begin{equation}\label{distinct} 
r_1j_1+r_2j_2+\cdots r_sj_s+mkj_{s+1}=r_1j_1'+r_2j_2'+\cdots r_sj_s'+mkj_{s+1}', 
\end{equation} 
where $0\le j_{s+1}, j_{s+1}'\le l-1, 0\le j_i, j_i'\le m_i-1, i=1,2,\cdots,s$. 
In view of (\ref{inverse}) and (\ref{r}), we see that for each $i$ with $1\le i\le s$, 
\begin{equation}\label{ri} 
\left\{\begin{array}{l} 
r_i\equiv a_i\pmod{m_i}, \\ 
r_{i}\equiv0\pmod {m_{i'}} \text{ for any } i' \text{ with } i'\neq i, 1\le i'\le s. 
\end{array} 
\right. 
\end{equation} 
By considering modulo $m_i$ on both sides of (\ref{distinct}), we find 
\[a_ij_i\equiv a_ij_i'\pmod{m_i}.\] 
Since $(a_i, m_i)=1$ by (\ref{coprime}), it follows that $j_i=j_i'$. Inserting this fact into (\ref{distinct}) yields $j_{s+1}=j_{s+1}'$. 

Next we show the residue classes in $\mathcal{B}$ are disjoint, i.e. (\ref{set}) holds. 
Notice that $d\in mk\mathbb{N}\setminus mkl\mathbb{N}$ implies $m_i\mid d$ for any $i$ with $1\le i\le s$. If $d\in r_i\mathbb{N}\setminus r_im_i\mathbb{N}$, then $d$ is of the form $r_i(m_in+j_i)$ for some $n\ge0$ and $j_i$ with $1\le j_i\le m_i-1$. Thus by (\ref{ri}) we have 
\[d\equiv r_ij_i\equiv a_ij_i\pmod{m_i}.\] 
Again, by (\ref{coprime}) we have $(a_i, m_i)=1$. Since $1\le j_i\le m_i-1$, we find that $m_i\nmid d$. Therefore 
\[(mk\mathbb{N}\setminus mkl\mathbb{N})\bigcap(r_i\mathbb{N}\setminus r_im_i\mathbb{N})=\emptyset, i=1, 2,\cdots, s.\] 
We also note that for each $i'$ with $1\le i'\le s, i'\neq i$, if 
\[d\in r_{i'}\mathbb{N}\setminus r_{i'}m_{i'}\mathbb{N},\] 
then $r_{i'}\mid d$. But (\ref{ri}) implies $m_i\mid r_{i'}$, hence $m_i|d$. Thus 
\[(r_i\mathbb{N}\setminus r_im_i\mathbb{N})\bigcap(r_{i'}\mathbb{N}\setminus r_{i'}m_{i'}\mathbb{N})=\emptyset, i'\neq i.\] 
Therefore the residue classes in $\mathcal{B}$ are disjoint. Theorem \ref{mainth} follows from Theorem \ref{lemma} immediately. 
\end{proof} 
\begin{remark}\label{ma} 
The Chinese remainder theorem shows that the set 
\[\{r_1j_1+r_2j_2+\cdots +r_sj_s: 0\le j_i\le m_i, i=1, 2, \cdots, s\}\] is a complete residue system modulo $m$. This implies that if this system is not the least positive residue system modulo $m$, then, as $k=1$ and $l$ tends to $\infty$ in Theorem \ref{mainth}, the set $\mathcal{A}$ differs from the set of natural numbers $\mathbb{N}$ by a finite set $\mathcal{A'}$. In fact, this finite set is 
\begin{align*} 
\mathcal{A'}=\bigcup_{i=1}^s\bigcup_{j_i=0}^{m_i-1}\{a\in\mathbb{N}:\quad &1\le a\le r_1j_1+r_2j_2+\cdots +r_sj_s-1, \\ 
&a\equiv r_1j_1+r_2j_2+\cdots +r_sj_s\pmod m\}. 
\end{align*} 
In this case Theorem \ref{mainth} reduces to 
\[P(\mathbb{N}\setminus\mathcal{A'};n)=Q(\mathcal{B};n),\] 
which is a real MacMahon-type partition theorem (Theorem \ref{macmahon}). 
\end{remark} 
We give an explicit example to illustrate MacMahon-type partition theorem. 
\begin{example} 
In Theorem \ref{mainth}, let $(m_1, m_2)=(2, 3) $, $ k=1$ and $l=\infty$. Then we have 
\[m=6, (M_1, M_2)=(3, 2), (\overline{M}_1, \overline{M}_2)=(1, 2)\] 
and 
\[ r_1\equiv a_1M_1\overline{M}_1\equiv3a_1\pmod6,\] 
\[ r_2\equiv a_2M_2\overline{M}_2 \equiv 4a_2 \pmod6.\] 
Take $(a_1, a_2)=(1, 1)$ and $ (r_1, r_2)=(3, 4)$. We have the identity 
\begin{align*} 
\frac{1}{1-x^6}\cdot\frac{1-x^6}{1-x^3}\cdot\frac{1-x^{12}}{1-x^4} 
&=\bigg(\sum_{i=0}^\infty x^{6i}\bigg)(1+x^3)(1+x^4+x^8)\\ 
&=\sum_{i=0}^\infty (x^{6i}+x^{6i+3}+x^{6i+4}+x^{6i+7}+x^{6i+8}+x^{6i+11})\\ 
&=\sum_{\substack{i=1\\i\neq 1, 2, 5}}^{\infty}x^i. 
\end{align*} 
Thus $\mathcal{A}'=\{1, 2, 5\}$ and Theorem \ref{mainth} gives 
\begin{align*} 
\mathcal{A}&=\mathbb{N}\setminus\mathcal{A}'=\{a\in \mathbb{N}: a\neq 1, 2, 5\},\\ 
\mathcal{B}&=\{6\mathbb{N}\bigcup(3\mathbb{N}\setminus6\mathbb{N}) 
\bigcup(4\mathbb{N}\setminus12\mathbb{N})\}\\ 
&=\{b\in\mathbb{N}: b\equiv0, 3, 4, 6, 8, 9\pmod{12}\} 
\end{align*} and 
$P(\mathcal{A};n)=Q(\mathcal{B};n)$. 
\end{example} 
Now we begin to prove Theorem \ref{corollary}. 
\begin{proof}[Proof of Theorem \ref{corollary}] By Theorem \ref{lemma} 
we have 
\begin{equation}\label{generating} 
\sum_{n=0}^{\infty} P(\mathcal{A};n)q^n 
=\prod_{n=1}^{\infty}\frac{1-q^{mkln}}{1-q^{mkn}}\cdot 
\prod_{i=1}^{s}\bigg(\prod_{n=1}^{\infty}\frac{1-q^{r_im_in}}{1-q^{r_in}}\bigg). 
\end{equation} 
We see from (\ref{ri}) that $m_i\mid m$ and $m_i|r_j$ for all $j\neq i, 1\le j\le s$, which implies each powers of $q$ in the expansion of infinite products above is a multiple of $m_i$ except that in the expansion of $\prod_{n=1}^{\infty}(1-q^{r_in})$. Therefore we can rewrite (\ref{generating}) as 
\begin{align*} 
\sum_{n=0}^{\infty} P(\mathcal{A};n)q^n 
&=\prod_{n=1}^{\infty}\frac{1}{1-q^{r_in}}G(q^{m_i}), 
\end{align*} 
where the power series $G(q)$ is clear by (\ref{generating}). Suppose that $G(q)$ has an expansion $G(q)=\sum_{v=0}^{\infty}g(v)q^{v}$. Using the generating function for $p(n)$, i.e.
\[\prod_{n=1}^{\infty}\frac{1}{1-q^{r_in}}=\sum_{u=0}^{\infty}p(u)q^{r_iu},\] 
we deduce that 
\begin{align*} 
\sum_{n=0}^{\infty} P(\mathcal{A};n)q^n 
&=\bigg(\sum_{u=0}^{\infty}p(u)q^{r_iu}\bigg)\bigg(\sum_{v=0}^{\infty}g(v)q^{m_iv}\bigg)\\ 
&=\sum_{n=0}^\infty \bigg(\sum_{r_iu+m_iv=n}p(u)g(v)\bigg)q^n. 
\end{align*} 
Hence 
\begin{equation}\label{gf} 
P(\mathcal{A};n)=\sum_{r_iu+m_iv=n}p(u)g(v). 
\end{equation} 
If $n\equiv a_ic\pmod{m_i}$, then 
\[n=r_iu+m_iv\equiv a_iu\equiv a_ic\pmod {m_i}.\] 
Since $(a_i,m_i)=1$, we have $u\equiv c\pmod{m_i}$. But by assumption $p(m_in+c)\equiv0\pmod d$ for all $n\ge0$, we deduce from (\ref{gf}) that 
\[P(\mathcal{A};m_in+a_ic)\equiv0\pmod d \] 
for all $n\ge0$. 
\end{proof} 
\section{ Partition theorems and complete residue systems} 
Theorem \ref{mainth} illustrates  that the set $\mathcal{A}$ in $P(\mathcal{A}, n)$ can be generated from a complete residue system (refer to  Remark \ref{ma}). This idea is useful to construct  more partition identities analogous to Theorem \ref{mainth}.  Let $m_1, m_2, \cdots, m_s$ be $s$ positive integers. We observe that the set
\[\{j_1+m_1j_2+m_1m_2j_3+\cdots+m_1m_2\cdots m_{s-1}j_{s}: j_i \in \mathbb{Z}/m_i\mathbb{Z}, i=1, 2, \cdots, s\}\]
is a complete residue system of modulo $m_1m_2\cdots m_s$. In this section, we shall use this fact to give another partition theorem.
\begin{theorem}\label{mainth1} 
Suppose that $l, s, m_1, m_2, \cdots, m_s, r_1, r_2, \cdots, r_s$ are positive integers and satisfy 
\begin{equation}\label{rr} 
r_1\mid r_{2},\, r_2\mid r_3,\, \cdots, r_{s}\mid r_{s+1}. 
\end{equation} 
Define the sets $\mathcal{A}$ and $\mathcal{B}$ 
\begin{align*}\mathcal{A}=\{r_1j_1&+m_1r_2j_2+\cdots+ m_1m_2\cdots m_{s-1}r_sj_s\\ 
&+ m_1m_2\cdots m_{s}r_{s+1}j_{s+1}: 0\le j_{s+1}\le l-1, 0\le j_i\le m_i-1, i=1,2,\cdots,s\} 
\end{align*} 
and 
\begin{align*} 
\mathcal{B}=&(r_1\mathbb{N}\setminus m_1r_1\mathbb{N})\cup (m_1r_2\mathbb{N}\setminus m_1m_2r_2\mathbb{N})\cup\cdots 
\\ 
&\cup(m_1m_2\cdots m_{s-1}r_s\mathbb{N}\setminus m_1m_2\cdots m_{s}r_s\mathbb{N})\\ 
&\cup (m_1m_2\cdots m_{s}r_{s+1}\mathbb{N}\setminus m_1m_2\cdots m_{s}r_{s+1}l\mathbb{N}). 
\end{align*} 
Then \[P(\mathcal{A};n)=Q(\mathcal{B};n).\] 
\end{theorem}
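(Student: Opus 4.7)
The plan is to apply Theorem~\ref{lemma} to the expansion of a product of $s+1$ finite geometric series, in the same spirit as the proof of Theorem~\ref{mainth}. Introduce the abbreviations $B_1 := r_1$ and $B_i := m_1 m_2 \cdots m_{i-1} r_i$ for $2 \le i \le s+1$; these are precisely the coefficients of $j_1, \ldots, j_{s+1}$ in the definition of $\mathcal{A}$. Set $\delta_{B_i} := m_i$ for $1 \le i \le s$ and $\delta_{B_{s+1}} := l$. Each factor $\sum_{j_i = 0}^{\delta_{B_i} - 1} x^{B_i j_i}$ equals $(1 - x^{\delta_{B_i} B_i})/(1 - x^{B_i})$, and multiplying the $s+1$ factors and expanding recovers $\sum_{(j_1, \ldots, j_{s+1})} x^{B_1 j_1 + \cdots + B_{s+1} j_{s+1}}$, which is $1 + \sum_{a \in \mathcal{A}} x^a$ provided the exponents are distinct. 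With $B = \{B_1, \ldots, B_{s+1}\}$ and the $\delta_{B_i}$ above, the set $\mathcal{B}$ produced by Theorem~\ref{lemma} coincides term by term with the $\mathcal{B}$ in the statement, so it remains to verify the two hypotheses of Theorem~\ref{lemma}: distinctness of the exponents in $\mathcal{A}$ and disjointness of the classes forming $\mathcal{B}$.

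For the distinctness I would run a mixed-radix induction. Suppose $\sum_i B_i j_i = \sum_i B_i j'_i$. Since $r_1 \mid r_i$ for every $i$ by the divisibility chain~(\ref{rr}), one has $r_1 \mid B_i$ for all $i$, so dividing through by $r_1$ and reducing modulo $m_1$ gives $j_1 \equiv j'_1 \pmod{m_1}$ (every term with $i \ge 2$ picks up the factor $m_1$ from $B_i$); the range constraint $0 \le j_1, j'_1 \le m_1 - 1$ then forces $j_1 = j'_1$. Cancelling these and repeating with $B_2$ in place of $B_1$, using $r_2 \mid r_i$ for $i \ge 2$, yields $j_2 = j'_2$, and iterating along the chain~(\ref{rr}) eliminates all coordinates. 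This is routine, and is the only place where the full divisibility chain enters the argument for $\mathcal{A}$.

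For disjointness, fix $1 \le i < j \le s+1$ and let $d$ lie in $B_j \mathbb{N} \setminus \delta_{B_j} B_j \mathbb{N}$. Because $r_i \mid r_j$, I may write $B_j = m_1 \cdots m_{j-1} r_j = (m_i m_{i+1} \cdots m_{j-1}) \cdot B_i$, which is a multiple of $\delta_{B_i} B_i = m_i B_i$; hence $d$ itself is a multiple of $\delta_{B_i} B_i$ and therefore does \emph{not} belong to $B_i \mathbb{N} \setminus \delta_{B_i} B_i \mathbb{N}$. (The case $j = s+1$, where $\delta_{B_{s+1}} = l$, is identical.) This establishes condition~(\ref{set}), and Theorem~\ref{lemma} immediately yields $P(\mathcal{A}; n) = Q(\mathcal{B}; n)$. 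The main obstacle is the bookkeeping in the mixed-radix distinctness step; the divisibility chain~(\ref{rr}) trivialises the disjointness and the underlying geometric-series identity.
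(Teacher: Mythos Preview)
Your proposal is correct and follows essentially the same route as the paper: expand the product of $s+1$ finite geometric series, verify distinctness of the exponents by the mixed-radix peel-off (divide by $r_1$, reduce mod $m_1$, cancel, repeat), check disjointness of the pieces of $\mathcal{B}$ using the divisibility chain, and invoke Theorem~\ref{lemma}. One small slip: the displayed equality $B_j = (m_i m_{i+1}\cdots m_{j-1})\cdot B_i$ is off by the integer factor $r_j/r_i$; once that factor is inserted the divisibility $\delta_{B_i}B_i \mid B_j$ you actually use is correct.
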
 
Note that  when $r_1=r_2=\cdots=r_{s+1}=1$ and $m_1=m_2=\cdots=m_s=1$, Theorem \ref{mainth1} reduces to Theorem \ref{Glaisher} for $d=l$. Therefore Theorem \ref{mainth1} may also be seen as a natural generalization of Euler's theorem.
\begin{proof}[Proof of Theorem \ref{mainth1}] 
We expand the (finite) geometric series and obtain 
\begin{align*}\label{main} 
&\frac{1-x^{m_1r_1}}{1-x^{r_1}}\frac{1-x^{m_1m_2r_2}}{1-x^{m_1r_2}}\cdots 
\frac{1-x^{ m_1m_2\cdots m_{s-1}m_sr_s}}{1-x^{m_1m_2\cdots m_{s-1}r_s }}\frac{1-x^{ m_1m_2\cdots m_{s}r_{s+1}l}}{1-x^{m_1m_2\cdots m_{s}r_{s+1}}}\\ 
&=\bigg(\sum_{j_1=0}^{m_1-1}x^{r_1j_1}\bigg)\bigg(\sum_{j_2=0}^{m_2-1}x^{m_1r_2j_2}\bigg) 
\cdots\bigg(\sum_{j_s=0}^{m_s-1}x^{ m_1m_2\cdots m_{s-1}r_sj_s}\bigg) 
\\ 
&\hskip6cm\times\bigg(\sum_{j_{s+1}=0}^{l-1}x^{ m_1m_2\cdots m_{s}r_{s+1}j_{s+1}}\bigg)\\ 
&=\sum_{j_1=0}^{m_1-1}\cdots\sum_{j_s=0}^{m_s-1}\sum_{j_{s+1}=0}^{l-1} 
x^{r_1j_1+m_1r_2j_2+\cdots+ m_1m_2\cdots m_{s-1}r_sj_s+ m_1m_2\cdots m_{s}r_{s+1}j_{s+1}}. 
\end{align*} 
We claim that the exponents of $x$ above are distinct, that is, the set $\mathcal{A}$ is not a multiset. To see this, suppose that 
\begin{equation}\label{distinct1} 
r_1j_1+m_1r_2j_2+\cdots+ m_1m_2\cdots m_{s-1}r_sj_s+m_1m_2\cdots m_{s}r_{s+1}j_{s+1} 
\end{equation} 
\begin{equation*} 
=r_1j_1'+m_1r_2j_2'+\cdots+ m_1m_2\cdots m_{s-1}r_sj_s'+m_1m_2\cdots m_{s}r_{s+1}j_{s+1}', 
\end{equation*} 
where $0\le j_{s+1}, j_{s+1}'\le l-1, 0\le j_i, j_i'\le m_i-1, i=1,2,\cdots,s$. By (\ref{rr}) 
dividing by $r_1$ on both sides of (\ref{distinct1}) and considering modulo $m_1$, we see that $j_1=j_1'$. Then deleting $r_1j_1$, dividing by $m_1r_2$ on both sides of (\ref{distinct1}) and considering modulo $m_2$, we deduce that $j_2=j_2'$. Continuing in this way, we find that $j_i=j_i'$ for $i=0, 1, \cdots, s$. In view  of (\ref{rr}), it is obvious that the residue classes in $\mathcal{B}$ are disjoint. Theorem \ref{mainth1} follows from Theorem \ref{lemma}. 
\end{proof} 
Analogous to Theorem \ref{corollary}, with the help of the generating function, we can show that the partition function  $P(\mathcal{A};n)$ in Theorem \ref{mainth1} also possesses  a similar congruence relation with the  partition function $p(n)$.
\begin{theorem}\label{corollary1} 
Let $p(n)$ be the partition function. Under the assumption of Theorem \ref{mainth1}, if there exist positive integers $c, d$ such that 
\[p(m_1n+c)\equiv0\pmod d\] 
for all $n\ge0$, then 
\[P(\mathcal{A}, m_1n+c)\equiv0\pmod d\] 
for all $n\ge0$. 
\end{theorem}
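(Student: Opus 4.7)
The plan is to adapt the generating-function argument used for Theorem~\ref{corollary}, isolating the factor of the generating function for $P(\mathcal{A};n)$ that plays the role of the generating function for $p(n)$.

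First, I would invoke Theorem~\ref{lemma} together with the finite geometric expansion displayed in the proof of Theorem~\ref{mainth1} to write
\[
\sum_{n=0}^{\infty} P(\mathcal{A}; n) q^n = \prod_{n=1}^{\infty} \frac{1 - q^{m_1 r_1 n}}{1 - q^{r_1 n}} \cdot \prod_{i=2}^{s} \prod_{n=1}^{\infty} \frac{1 - q^{m_1 m_2 \cdots m_i r_i n}}{1 - q^{m_1 m_2 \cdots m_{i-1} r_i n}} \cdot \prod_{n=1}^{\infty} \frac{1 - q^{m_1 \cdots m_s r_{s+1} l n}}{1 - q^{m_1 \cdots m_s r_{s+1} n}}.
\]
The key observation is that every factor here \emph{except} the denominator $1 - q^{r_1 n}$ has an exponent divisible by $m_1$. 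So I can peel off that single $p$-like piece and write
\[
\sum_{n=0}^{\infty} P(\mathcal{A}; n) q^n = \prod_{n=1}^{\infty} \frac{1}{1 - q^{r_1 n}} \cdot G(q^{m_1})
\]
for some power series $G(q) = \sum_v g(v) q^v$ whose coefficients are determined by the remaining product.

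Second, expanding $\prod_n 1/(1 - q^{r_1 n}) = \sum_u p(u) q^{r_1 u}$ and convolving yields
\[
P(\mathcal{A}; N) = \sum_{r_1 u + m_1 v = N} p(u)\, g(v).
\]
For $N = m_1 n + c$, any pair $(u, v)$ occurring in the sum satisfies $r_1 u \equiv c \pmod{m_1}$, from which I would deduce $u \equiv c \pmod{m_1}$ (mirroring the step in the proof of Theorem~\ref{corollary} where the coprimality condition $(a_i, m_i) = 1$ was used to pass from $r_i u \equiv a_i c$ to $u \equiv c$). Writing $u = m_1 u' + c$, the hypothesis $p(m_1 u' + c) \equiv 0 \pmod d$ kills every term in the convolution, giving $P(\mathcal{A}; m_1 n + c) \equiv 0 \pmod d$.

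The main obstacle is a careful bookkeeping check in the splitting step: one must verify that every numerator $1 - q^{m_1 \cdots m_i r_i n}$, every denominator $1 - q^{m_1 \cdots m_{i-1} r_i n}$ for $i \ge 2$, and the final factor $(1 - q^{m_1 \cdots m_s r_{s+1} l n})/(1 - q^{m_1 \cdots m_s r_{s+1} n})$ contributes only powers of $q^{m_1}$, so that the grouping into $G(q^{m_1})$ is legitimate. This rests on the visible factor of $m_1$ in each such exponent and on the divisibility chain $r_1 \mid r_2 \mid \cdots \mid r_{s+1}$, but it has to be spelled out cleanly. Once that is secured and the congruence $r_1 u \equiv c \pmod{m_1}$ is resolved to $u \equiv c \pmod{m_1}$, the conclusion is a direct application of the hypothesis on $p$.
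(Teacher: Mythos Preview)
Your approach is the same as the paper's: factor the generating function as $\prod_n (1-q^{r_1 n})^{-1}$ times a series in $q^{m_1}$, convolve with the partition generating function, and reduce modulo $m_1$. The paper in fact records the slightly sharper factorization $G_1(q^{r_1 m_1})$ (using the chain $r_1\mid r_2\mid\cdots\mid r_{s+1}$), but then simply says the remaining argument is as in Theorem~\ref{corollary}; your write-up is precisely that argument spelled out.

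There is, however, a genuine gap at the step ``from $r_1 u\equiv c\pmod{m_1}$ deduce $u\equiv c\pmod{m_1}$.'' In the proof of Theorem~\ref{corollary} the analogous cancellation worked because one had $r_i\equiv a_i\pmod{m_i}$ with $(a_i,m_i)=1$, and the \emph{conclusion} there was about $m_in+a_ic$, not $m_in+c$. Theorem~\ref{mainth1} imposes no condition relating $r_1$ and $m_1$, so in general $r_1$ cannot be cancelled. A concrete obstruction: take $s=1$, $m_1=5$, $r_1=r_2=2$, $l=1$, so that $\mathcal{A}=\{2,4,6,8\}$ and $\sum_n P(\mathcal{A};n)q^n=\prod_n(1-q^{10n})/(1-q^{2n})$; then $P(\mathcal{A};4)=2\not\equiv 0\pmod 5$, even though $p(5n+4)\equiv 0\pmod 5$ by Ramanujan. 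Thus the statement as written needs an extra hypothesis such as $r_1\equiv 1\pmod{m_1}$ (in particular $r_1=1$ suffices); under that assumption your argument goes through verbatim, but without it neither your deduction nor the paper's sketch can be completed.
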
 
\begin{proof} 
From the proof of Theorem \ref{mainth1} we see easily that the generating function for $P(\mathcal{A},n)$ is 
\begin{align*} 
&\quad\sum_{n=0}^{\infty}P(\mathcal{A},n)q^n\\ 
&= 
\prod_{n=1}^{\infty}\frac{1-q^{r_1m_1n}}{1-q^{r_1n}}\frac{1-q^{r_2m_1m_2n}}{1-q^{r_2m_1n}}\cdots 
\frac{1-q^{r_s m_1m_2\cdots m_{s-1}m_sn}}{1-q^{r_s m_1m_2\cdots m_{s-1}n}}\frac{1-q^{ r_{s+1}m_1m_2\cdots m_{s}ln}}{1-q^{r_{s+1}m_1m_2\cdots m_{s}n}} \\ 
&=\prod_{n=1}^{\infty}\frac{1}{1-q^{r_1n}}\cdot G_1(q^{r_1m_1}), 
\end{align*} 
say.  
The remaining arguments are similar to those in the proof of Theorem \ref{corollary} and are therefore omitted. 
\end{proof} 

We have seen from Theorems \ref{mainth} and \ref{mainth1} that   complete residue systems  are highly useful in constructing partition identities of the form (\ref{identity}). Certainly, one can obtain more partition theorems in this manner.  
Finally,  we emphasize  that any  bijection proofs of Theorems \ref{mainth} and \ref{mainth1} would be very interesting. 
\vskip1cm
\noindent{\bf Acknowledgements} This work was  supported by  the national natural science foundation of China (11771121). 
 
\end{document}